\numberwithin{equation}{section}
\theoremstyle{plain}
\newtheorem{thm}{Theorem}[section]
\newtheorem{lem}[thm]{Lemma}
\theoremstyle{definition}
\newcommand{\ZZ}{\mathbb{Z}}
\newcommand{\HH}{\mathbb{H}}
\newcommand{\QQ}{\mathbb{Q}}
\newcommand{\OO}{\mathbb{O}}
\newcommand{\FF}{\mathbb{F}}
\newcommand{\Map}{\mathrm{Map}}
\begin{document}

\title{Loop homology of quaternionic projective spaces}

\author[M.~\v Cadek, Z.~Moravec ]{Martin \v Cadek,
Zden\v ek  Moravec}

\address{\newline Department of Mathematics, Masaryk University,
Kotl\' a\v rsk\' a 2, 611 37 Brno, Czech Republic}
\email{cadek@math.muni.cz}

\email{106681@mail.muni.cz}

\date{April 9, 2010}

\keywords{Quaternionic projective space, octonionic projective plane, free
loop space, integral loop homology, Batalin-Vilkovisky algebra}
\subjclass[2000]{55P35; 55R20}

\thanks{This work was supported by the grant MSM0021622409 of
the Czech Ministry of Education and the grant 0964/2009 of Masaryk
University.}

\abstract
{We determine the Batalin-Vilkovisky algebra structure of the integral loop
homology of quaternionic projective spaces and octonionic projective plane. }
\endabstract

\maketitle

\section{Introduction}

Let $M$ be a closed oriented manifold of dimension $d$ and let 
$LM = \Map (S^{1}, M)$ denote its free loop space. By loop homology 
we understand the homology groups of $LM$ with the degree shifted by $-d$
$$
\HH _{*} (LM) = H_{*+d}(LM).
$$
In \cite{ChS} it was shown that this graded group can be equipped with 
a product and an operator $\Delta$ giving $\HH_{*}(LM)$ the structure of a
Batalin-Vilkovisky algebra. The methods computing the product on concrete 
manifolds are based either on the modified Serre spectral sequence derived 
in \cite{CJY} or on the isomorphism of the loop homology of $M$ with
the Hochschild cohomology $HH^*(C^*(M);C^*(M))$ of the cochain complex as
rings, \cite{CJ}. There is also a way of defining a BV-structure on
$HH^*(C^*(M);C^*(M))$, \cite{Tr}. The BV-algebra structures 
on the loop homology and the Hochschild cohomology are isomorphic over the 
fields of characteristic zero (\cite{FT}) but not over other coefficients 
in general. Hence the  computation of the BV operator is more subtle. So far 
the BV-algebra structure of the loop homology with integral coefficients has 
been determined for the Lie groups \cite{He1}, for the spheres \cite{M}, 
for the complex Stiefel manifolds \cite{Ta} and for the complex projective 
spaces \cite{He2}. Over rationals it has been described for the quaternionic 
projective spaces \cite{Y} and the surfaces \cite{V}.  

The aim of this note is to describe the BV-algebra structure of the integral
loop homology of the quaternionic projective spaces $\HH P^{n}$ and the 
octonionic projective plane $\OO P^2$. 

\begin{thm}\label{main1} The string topology BV-algebra structure of 
$\HH P^n$ is given by
$$
\HH_*(L\HH P^n;\ZZ)\cong 
\frac{\ZZ[a,b,x]}{\langle a^{n+1}, b^2, a^{n}\cdot b, (n+1) a^{n}\cdot
x\rangle}
$$
with $a \in \HH_{-4}(L\HH P^{n};\ZZ)$,  $b \in \HH_{-1} (L\HH P^{n};\ZZ)$ 
and  $x \in \HH_{4n+2}(L\HH P^{n})$, and
$$
\Delta(a^{p} x^{q}) =0, \quad \Delta(a^{p} b x^{q} ) = [(n-p) + q (n + 1) ]
a^{p} x^{q} 
$$
for all $0 \leq p \leq n$, $0 \leq q$.
\end{thm}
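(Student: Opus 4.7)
The plan is to proceed in three stages---additive structure, multiplicative structure, and BV operator---largely following the strategy of \cite{He2} for the complex projective case and importing rational data from \cite{Y}. First I would compute $H_*(\Omega\HH P^n;\ZZ)$ from the path--loop Serre spectral sequence of $\HH P^n$: the transgressions $\tau(\alpha^*)=u$ and $\tau(\beta^*)=u^n\alpha^*$, with $|\alpha^*|=3$ and $|\beta^*|=4n+2$, give a free integral cohomology algebra $\Lambda(\alpha^*)\otimes\ZZ[\beta^*]$, and hence a free homology module $H_*(\Omega\HH P^n;\ZZ)$. Running the Serre spectral sequence
$$E^2_{p,q}=H_p(\HH P^n)\otimes H_q(\Omega\HH P^n;\ZZ)\Longrightarrow H_{p+q}(L\HH P^n;\ZZ)$$
for the free loop fibration $\Omega\HH P^n\to L\HH P^n\to\HH P^n$, the local coefficients are trivial, the bottom row survives thanks to the constant-loop section, and the only non-trivial differential is $d_{4n}\colon[\HH P^n]\otimes\alpha\beta^{q-1}\mapsto(n+1)\cdot 1\otimes\beta^q$; the coefficient $(n+1)$ reflects the derivation identity $s(u^{n+1})=(n+1)u^n\cdot s(u)$ that controls the free loop analogue of the path--loop transgression. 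After the degree shift $\HH_*=H_{*+4n}$ this yields the additive structure of the quotient ring.

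Next I identify the generators on $E^\infty$ as $a=[\HH P^{n-1}]\otimes 1$, $b=[\HH P^{n-1}]\otimes\alpha$ and $x=[\HH P^n]\otimes\beta$. The multiplicative Cohen--Jones--Yan spectral sequence \cite{CJY} reduces the loop product on the bottom row to the intersection product on $\HH P^n$, producing $a^{n+1}=0$. The relations $b^2=0$ and $a^nb=0$ hold for degree reasons ($\HH_{-2}$ and $\HH_{-4n-1}$ both vanish), and $(n+1)a^nx=0$ encodes the $d_{4n}$ differential; no hidden multiplicative extensions arise, because the remaining classes are torsion-free and detected on $E^\infty$.

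For the BV operator, constant loops are pointwise $S^1$-fixed, so $\Delta(a^p)=0$, and a direct degree count shows $\HH_{-4p+q(4n+2)+1}=0$ for every $p,q\ge 0$, giving $\Delta(a^px^q)=0$ in all cases---this already yields the first of the stated formulas. For $\Delta(b)\in\HH_0=\ZZ\cdot 1$ I invoke Yang's rational computation $\Delta(b)=n$ from \cite{Y}; torsion-freeness of $\HH_0$ lifts this to the integers. The BV identity
$$\Delta(fg)=\Delta(f)g+(-1)^{|f|}f\Delta(g)+(-1)^{|f|}\{f,g\}$$
together with the analogous lifts $\Delta(ab)=(n-1)a$ and $\Delta(bx)=(2n+1)x$ then yields the brackets $\{a,b\}=-a$ and $\{b,x\}=-(n+1)x$. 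Iterating the graded Leibniz rule for the bracket gives $\{a^p,bx^q\}=-p\,a^px^q$, and plugging this into the BV identity applied to $a^p\cdot bx^q$ produces $\Delta(a^pbx^q)=[(n-p)+q(n+1)]a^px^q$; at $p=n$ the vanishing $a^nbx^q=0$ forces the left-hand side to zero, consistent with the torsion identity $(n+1)a^nx^q=0$.

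The main obstacle will be the integral bookkeeping around $(n+1)a^nx=0$: Yang's formulas are rational, and compatible choices of integral generators for $a,b,x$ must be fixed so that the coefficients $n$, $n-1$, $2n+1$ come out exact. The degrees where $\Delta(b)$, $\Delta(ab)$ and $\Delta(bx)$ land are all torsion-free, so these lifts are unambiguous; the $d_{4n}$ differential then controls how the $(n+1)$-torsion propagates through $\Delta$ to give the formula on the nose.
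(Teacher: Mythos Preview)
Your overall strategy---the Cohen--Jones--Yan spectral sequence for the ring, then BV identities constrained by rational data---is the paper's strategy too. One difference in the ring computation: you argue the $d_{4n}$ differential directly from a transgression/derivation identity, whereas the paper instead imports the additive structure of $H_*(L\HH P^n;\ZZ)$ from the B\"ockstedt--Ottosen stable splitting of $(L\HH P^n)_+$ and lets that force the differentials. Either route can be made rigorous.

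There is, however, a real gap in your BV computation. You assert that torsion-freeness of the target groups makes the lifts of $\Delta(b)=n$, $\Delta(ab)=(n-1)a$, $\Delta(bx)=(2n+1)x$ from Yang ``unambiguous.'' Torsion-freeness only gives injectivity of $r_*\colon\HH_*(L\HH P^n;\ZZ)\to\HH_*(L\HH P^n;\QQ)$ in those degrees; it does \emph{not} tell you that your integral generator $b$ maps to Yang's $\beta$ rather than to $c_b\beta$ for some $c_b\in\QQ^\times$. The integral generator is pinned down only up to sign, while Yang's $\beta$ is fixed by a Hochschild-side construction and the F\'elix--Thomas isomorphism; nothing you have written forces $c_b=\pm 1$. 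Concretely: the seven-term BV identity together with $a^nb=0$ gives $\Delta(a^pb)=(n-p)\lambda\,a^p$ for some integer $\lambda$, and comparison with Yang yields only $c_b=\lambda$, which is consistent for every $\lambda\in\ZZ$. So ``compatible choices of integral generators'' is not something you are free to arrange---the integral $b$ is already fixed, and the question is what multiple of it $\beta$ is.

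The paper supplies the missing integral anchor via the inclusion $i\colon S^4=\HH P^1\hookrightarrow\HH P^n$. The induced map $H_*(LS^4)\to H_*(L\HH P^n)$ commutes with the unshifted operator $\Delta'$, and a comparison of Serre spectral sequences shows it is an isomorphism in the two degrees carrying $1$ and $b_1$; Menichi's formula $\Delta(b_1)=1$ on $LS^4$ then gives $\Delta(a^{n-1}b)=a^{n-1}$ over $\ZZ$, i.e.\ $\lambda=1$. Only after this does the paper invoke Yang, and then solely to extract $\rho_1=2n+1$ (the same comparison simultaneously confirms $c_b=1$). Without some such geometric input over $\ZZ$, your argument determines $\Delta$ only up to the unknown integer factor $\lambda$.
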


Let us note that for $n=1$ the quaternionic projective space is $S^4$ and
the statement agrees with the result obtain by L. Menichi in \cite{M} for
even dimensional spheres.

\begin{thm}\label{main2} There are elements $a \in \HH_{-8}(L\OO P^{2};\ZZ)$,  $b \in
\HH_{-1} (L\OO P^{2};\ZZ)$
and  $x \in \HH_{22}(L\OO P^{2})$ such that the string topology BV-algebra 
structure of $\OO P^2$ is given
by
$$
\HH_*(L\OO P^2;\ZZ)\cong
\frac{\ZZ[a,b,x]}{\langle a^{3}, b^2, a^{2}\cdot b, 3 a^{n}\cdot
x\rangle}
$$
and
$$
\Delta(a^{p} x^{q}) =0, \quad \Delta(a^{p} b x^{q} ) = (2+3q-p)a^{p} x^{q}
$$
for all $0 \leq p \leq 2$, $0 \leq q$.
\end{thm}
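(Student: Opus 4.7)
The plan is to follow the template used to prove Theorem \ref{main1}, adapted to the smaller truncation length but larger cell dimension of $\OO P^{2}$. The main tool is the Cohen-Jones-Yan modified Serre spectral sequence associated to the free loop fibration
$$
\Omega \OO P^{2} \to L\OO P^{2} \to \OO P^{2},
$$
whose $E_{2}$-page is $H^{p}(\OO P^{2}; H_{q}(\Omega \OO P^{2};\ZZ))$ and which converges as a bigraded algebra to the Chas-Sullivan product on $\HH_{*}(L\OO P^{2};\ZZ)$. I would first compute $H_{*}(\Omega \OO P^{2};\ZZ)$, for example via the cobar construction on $C^{*}(\OO P^{2}) \cong \ZZ[u]/\langle u^{3}\rangle$ with $|u|=8$, or directly from the path-loop fibration together with the cellular decomposition $\OO P^{2} = S^{8}\cup e^{16}$. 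Because $H^{*}(\OO P^{2};\ZZ)$ has the same truncated-polynomial shape as $H^{*}(\HH P^{2};\ZZ)$ (with degrees merely doubled), the $E_{2}$-page and the pattern of differentials closely mirror the case $n=2$ of Theorem \ref{main1}.

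Next, I would read off representative classes $a, b, x$ on the $E_{\infty}$-page and resolve the multiplicative extensions. The relations $a^{3}=0$, $b^{2}=0$, and $a^{2}b=0$ should already be visible at $E_{2}$, while the integral torsion relation $3a^{2}x=0$ arises as a hidden multiplicative extension; the coefficient $3$ is the analog, for $n=2$, of the $n+1$ appearing in Theorem \ref{main1} and is the integral echo of the relation $u^{3}=0$ in cohomology. With the ring structure in hand, the BV operator is determined on generators: $\Delta(a^{p}x^{q}) = 0$ follows by degree-parity, while the base values $\Delta(b) = 2$ and $\Delta(bx) = 5x$ can be obtained from the geometric $S^{1}$-action on explicit representatives (compare Menichi's computation for $S^{8}$, which sits inside $\OO P^{2}$ as the bottom cell). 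The general formula $\Delta(a^{p}bx^{q}) = (2+3q-p)\,a^{p}x^{q}$ then follows by induction on $p$ and $q$ using the BV identity
$$
\Delta(uv) = \Delta(u)v + (-1)^{|u|}u\Delta(v) + (-1)^{|u|}\{u,v\}
$$
together with the derivation property of the Chas-Sullivan bracket.

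The main obstacle is handling the integral torsion. Over $\QQ$ the statement parallels Yang's computation for $\HH P^{n}$ in \cite{Y}, but the Hochschild-cohomology route is closed integrally since the string-topology BV and the Hochschild BV can disagree over $\ZZ$. Consequently, $\Delta$ must be evaluated from its geometric definition via the $S^{1}$-action on $L\OO P^{2}$, and one must verify that the spectral-sequence representatives of $a, b, x$ are chosen coherently enough to yield precisely the integer $2+3q-p$. The most delicate case is $p=2$, where the induction produces the class $3q\cdot a^{2}x^{q}$, which must vanish in order to be consistent with $a^{2}b x^{q} = 0$; this consistency check is exactly the integral torsion relation $3a^{2}x=0$, so that the ring-theoretic and BV-theoretic parts of the argument have to be carried out in tandem rather than sequentially.
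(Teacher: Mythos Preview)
Your outline parallels the paper's strategy but has a genuine gap in the BV computation: you cannot extract $\Delta(bx)=5x$ from the inclusion $S^{8}\hookrightarrow\OO P^{2}$. That inclusion induces isomorphisms $H_{i}(LS^{8})\to H_{i}(L\OO P^{2})$ only in the low range $i\leq 8$; in particular the generator $x\in\HH_{22}$ (i.e.\ $H_{38}$) is invisible from $LS^{8}$, since $H_{22}(\Omega S^{8})=0$ while $H_{22}(\Omega\OO P^{2})\cong\ZZ$. What the sphere comparison actually gives (after the $7$-term BV relation and the constraint $\nu_{2}=0$ coming from $a^{2}b=0$) is $\Delta(ab)=a$ and hence $\Delta(b)=2$. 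For $\Delta(bx^{q})=\rho_{q}x^{q}$ the recursion only yields $\rho_{q}=q(\rho_{1}-2)+2$, and the consistency check you propose at $p=2$ merely forces $\rho_{1}\equiv 2\pmod 3$, not $\rho_{1}=5$. The paper closes this gap by passing to $\QQ$: the change-of-coefficients map $\HH_{*}(L\OO P^{2};\ZZ)\to\HH_{*}(L\OO P^{2};\QQ)$ is a BV morphism, and on the rational side one invokes the F\'elix--Thomas isomorphism with Hochschild cohomology together with Yang's explicit computation of the BV structure on $HH^{*}(\QQ[y]/y^{3})$; comparing coefficients then pins down $\rho_{1}=5$. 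You explicitly discard the Hochschild route on the grounds that it can fail integrally, but the point is that it is only used over $\QQ$ to determine a single integer.

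A smaller issue: the torsion relation $3a^{2}x=0$ is not a hidden multiplicative extension. It is already present on $E^{\infty}$, arising from the differential $d^{16}\colon E^{16}_{0,22m-15}\to E^{16}_{-16,22m}$ being multiplication by $3$. In the paper this differential is not computed intrinsically; it is read off from the a~priori knowledge of the additive structure of $H_{*}(L\OO P^{2};\ZZ)$ supplied by the B\"okstedt--Ottosen stable splitting. Your proposal does not indicate any substitute for this external input.
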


The statements of Theorem \ref{main1} and \ref{main2} concerning the ring
structure are consequences of the computation of $HH^*(\ZZ[y]/y^{n+1};
\ZZ[y]/y^{n+1})$ in \cite{Y} and the ring isomorhism between the loop
homology and the Hochschild cohomology. 
Nevertheless, we provide an alternative proof using the Serre spectral sequence for the fibrations
$\Omega M\to LM\to M$ converging to the ring $\HH_*(LM;\ZZ)$. These computations 
will be carried out in the next section.

In the last section we will show what the BV operator $\Delta$ looks like. 
We use the knowledge of $\Delta$ on $S^4$ and $S^8$ and the inclusions 
$S^4=\HH P^1\hookrightarrow \HH P^n$ and $S^8\hookrightarrow \OO P^2$. 
The computation will be completed by comparing $\Delta$ in integral homology
with BV-operator $\Delta$ in rational cohomology computed by Yang in
\cite{Y}. The results show that for the quaternionic projective 
spaces and the octonionic projective plane the BV-algebra structures on the 
loop homology and the Hochschild homology over integers are isomorphic 
(in contrast to the complex projective spaces, see \cite{He2}).

\section{The ring structure of loop homology}

According to \cite{CJY} the spectral sequence for the fibration
$\Omega M\to LM\to M$ with $ E_{p,q}^{2} = H^{-p}(M ; H_{q}(\Omega M;\ZZ))$
and the product coming from the Pontryagin product in $H_*(\Omega M;\ZZ)$ and 
the cup product in $H^*(M;H_*(\Omega M;\ZZ)$  converges to $\HH_{p+q}(LM;\ZZ)$ 
as an algebra. To apply this spectral sequence to $M=\HH P^n$ we have to
determine the Pontryagin ring $H_*(\Omega \HH P^n;\ZZ)$. We will consider
$n\ge 2$ since for $\HH P^1=S^4$ the statement of Theorem \ref{main1} has
been proved in \cite{M}. 

\begin{lem}\label{omegahp} For $n\ge 2$ the Pontryagin ring structure of 
$H_*(\Omega \HH P^n;\ZZ)$ is given by
$$
H_*(\Omega\HH P^n;\ZZ)\cong
\ZZ[x]\otimes \Lambda[t]
$$
where the degrees of $x$ and $t$ are $4n+2$ and $3$, respectively.
\end{lem}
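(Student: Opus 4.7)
The plan is to exploit the principal $S^{3}$-bundle $S^{3}\to S^{4n+3}\to\HH P^{n}$: extending its Puppe sequence by one step yields the fibration
$$
\Omega S^{4n+3}\longrightarrow \Omega\HH P^{n}\longrightarrow S^{3},
$$
and I would run the homology Serre spectral sequence of this fibration. Its base and fiber have very simple Pontryagin rings: $H_{*}(S^{3};\ZZ)=\Lambda[t]$ with $|t|=3$, and by the James construction $H_{*}(\Omega S^{4n+3};\ZZ)=\ZZ[x]$ with $|x|=4n+2$. Thus $E^{2}_{p,q}=H_{p}(S^{3})\otimes H_{q}(\Omega S^{4n+3})$ is concentrated at $p\in\{0,3\}$ and $q$ a nonnegative multiple of $4n+2$.

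The only potentially nonzero differential is $d_{3}\colon E^{3}_{3,q}\to E^{3}_{0,q+2}$; for source and target to be simultaneously nonzero one would need $(4n+2)\mid 2$, which fails for $n\geq 2$. So $E^{\infty}=E^{2}$, and additively $H_{m}(\Omega\HH P^{n};\ZZ)\cong\ZZ$ exactly when $m\in\{k(4n+2),\,k(4n+2)+3:k\ge 0\}$ and vanishes otherwise.

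For the Pontryagin ring structure I would use multiplicativity of the spectral sequence---available because the fibration comes by looping a principal $S^{3}$-bundle, so all three spaces are H-spaces and all maps are H-maps---to conclude $E^{\infty}\cong\Lambda[t]\otimes\ZZ[x]$ as algebras. I would lift $x$ to the image of the generator of $H_{4n+2}(\Omega S^{4n+3})$ under the fiber inclusion (which is injective in this degree since $E^{\infty}_{0,4n+2}=\ZZ$), and $t$ to any preimage of the generator of $H_{3}(S^{3})$ under the projection $H_{3}(\Omega\HH P^{n})\to H_{3}(S^{3})$, which is an isomorphism by the collapse of the spectral sequence.

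Only one extension requires attention, namely $t^{2}\in H_{6}(\Omega\HH P^{n})$; this group vanishes for $n\geq 2$ because $6$ is neither of the form $k(4n+2)$ nor $k(4n+2)+3$, so $t^{2}=0$. Graded commutativity of $t$ and $x$ on $E^{\infty}$ lifts unambiguously because each homology group in question is cyclic, yielding $H_{*}(\Omega\HH P^{n};\ZZ)\cong\ZZ[x]\otimes\Lambda[t]$. The main delicate point is the multiplicativity of the \emph{homology} Serre spectral sequence with respect to the Pontryagin product; to sidestep any worry, one could instead run the cohomology Serre spectral sequence (manifestly multiplicative), obtain $H^{*}(\Omega\HH P^{n};\ZZ)\cong\Lambda[t^{*}]\otimes\Gamma[\alpha]$, and then dualize as a Hopf algebra---the divided power algebra $\Gamma[\alpha]$ dualizing to the polynomial algebra $\ZZ[x]$ under the Pontryagin product.
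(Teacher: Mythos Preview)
Your argument is correct, and it takes a genuinely different route from the paper's. Both start from the same fibration $\Omega S^{4n+3}\to\Omega\HH P^{n}\to S^{3}$, but the paper does not run a spectral sequence. Instead, it observes that $p_{*}\colon\pi_{k}(\Omega\HH P^{n})\to\pi_{k}(S^{3})$ is an isomorphism for $k\le 6$, hence there is a section $i\colon S^{3}\to\Omega\HH P^{n}$ up to homotopy; the map $\mu\circ(j,i)\colon\Omega S^{4n+3}\times S^{3}\to\Omega\HH P^{n}$ is then a weak equivalence, giving the additive splitting directly. For the multiplicative structure the paper passes to cohomology, identifies $H^{*}(\Omega\HH P^{n})\cong\Gamma_{\ZZ}[\alpha_{1},\alpha_{2},\dots]\otimes\Lambda[\beta]$ via this equivalence, computes the coproduct $\mu^{*}$ explicitly (using that $j^{*}$ is a Hopf algebra map to $H^{*}(\Omega S^{4n+3})$), and dualizes. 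Your primary approach is more streamlined---the collapse and the extension $t^{2}=0$ fall out of pure degree bookkeeping---but it leans on the multiplicativity of the \emph{homology} Serre spectral sequence for an H-fibration; you are right to flag this, and the cleanest justification is the one you implicitly use: the map $\Omega\HH P^{n}\to S^{3}$ is $\Omega$ of the classifying map $\HH P^{n}\to BS^{3}=\HH P^{\infty}$, so the whole fibration is a loop fibration and the Pontryagin products induce a pairing of spectral sequences. The paper's approach avoids that subtlety at the cost of the explicit Hopf-algebra dualization, which is essentially what your alternative sketch would require as well (the coproduct on $H^{*}$ does not come for free from the cohomology spectral sequence alone).
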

 
\begin{proof}
The Hopf fibration $ S^3\to S^{4n+3} \to \HH P^{n} $ gives us
the fibration
\begin{equation}
 \Omega S^{4n+3} \xrightarrow{j}   \Omega \HH P^{n} \xrightarrow{p}  S^3
\end{equation}
Since $p_{*} :\pi_{k} (\Omega \HH P^{n}) \to \pi_{k} (S^{3})$ is an
isomorphism for $0\le k\le 6$, there is up to homotopy a unique map 
$i: S^{3} \to \Omega \HH P^{n}$ such that $p\circ i$ is homotopic to the 
identity on $S^3$.
Therefore the long exact sequence of homotopy groups for this fibration
passes to short exact sequences which split:
$$
\xymatrix{
0 \ar[r] & \pi_{*}(\Omega S^{4n+3}) \ar[r]^-{j_{*} } &  \pi_{*}(\Omega \HH
P^{n}) \ar[r]^-{p_{*}} &  \pi_{*}(S^3) \ar[r]  \ar@<1ex>@{.>}[l]^-{i_*}& 0
}
$$
Denote by $\mu$ the Pontryagin product on $\Omega \HH P^{n}$. The map 
$h=\mu \circ (j,i):\Omega S^{4n+2}\times S^3\to \Omega\HH P^n$ 
is a homotopy equivalence since it induces an isomorphism of homotopy
groups. So we obtain an isomorphism of homology groups
$$
H_*(\Omega\HH P^n;\ZZ)\cong H_*(\Omega S^{4n+3};\ZZ)\otimes
H_*(S^3;\ZZ)\cong \ZZ[x]\otimes \Lambda[t].
$$
The Pontryagin ring structure of $H_*(\Omega\HH P^n;\ZZ)$ can be recovered
using the duality  between the Hopf algebras $H_*(\Omega\HH P^n;\ZZ)$ and
$H^*(\Omega\HH P^n;\ZZ)$. The map $h$ induces an algebra
isomorphism $h^*:H^*(\Omega \HH P^n;\ZZ)\to H^*(\Omega S^{4n+3};\ZZ)\otimes
H^*(S^3;\ZZ)$. We know that $H^*(\Omega\HH P^n;\ZZ)$ is a commutative
associative Hopf algebra with $\mu^*$ as a coproduct. As an algebra
$H^*(\Omega\HH P^n;\ZZ)\cong \Gamma_{\ZZ}[\alpha_1,\alpha_2,\dots]\otimes 
\Lambda[\beta]$, where $\Gamma_{\ZZ}[\alpha_1,\alpha_2,\dots]$ is a divided
polynomial algebra with generators $\alpha_i$ and relations 
$\alpha_{i} \alpha_{j} = \binom{i+j}{i}\alpha_{i+j}$. 
Since $j^*:H^*(\Omega\HH P^n;\ZZ)\to H^*(\Omega S^{4n+3};\ZZ)$ is a 
homomorphism of Hopf algebras and  the Hopf algebra structure of 
$H^*(\Omega S^{4n+3};\ZZ)$ is well known, the coproduct on 
$H_*(\Omega\HH P^n;\ZZ)$ is given by
\begin{gather*}
\mu^*(\beta) = \beta \otimes 1 + 1 \otimes \beta, \quad 
\mu^*(\alpha_{k})= \sum_{k=i+j} \alpha_{i} \otimes \alpha_{j}, \\
\mu^*(\alpha_{k}\beta) = \sum_{k=i+j} \alpha_{i} \beta \otimes \alpha_{j}
+ \sum_{k=i+j} \alpha_{i} \otimes \beta \alpha_{j}.
\end{gather*}
By duality this coproduct completely determines the Potryagin product in
$H^*(\Omega\HH P^n;\ZZ)$. Let $t \in H_{*}(\Omega \HH P^{n})$ be a dual 
element to $\beta$, $x_{k}$ be a dual to $\alpha_{k}$ and $z_{k}$ be a dual to 
$\alpha_{k} \beta$. Then 
$$
x_{i+j}=x_{i}x_{j}, \quad z_{i+j} = z_{i} x_{j}.
$$ 
If we put $x = x_{1}$, we obtain $x_i=x^i$ and $z_{i}=x^it$ for all
$i\ge 0$. This completes the proof.
\end{proof}

Now we return to the spectral sequence  
converging to the algebra $\HH_*(L\HH P^n;\ZZ)$. Its $E^2$ term is
$$
E_{-p,q}^{2}=H^{p}(\HH P^n;H_q(\Omega\HH P^n;\ZZ)) \cong H^{p}(\HH P^{n};\ZZ) 
\otimes H_{q} (\Omega \HH P^{n};\ZZ)\cong  
\frac{\ZZ[a] \otimes \ZZ[x,t]}{\langle a^{n+1}, t^2\rangle}
$$
where $a\in H^4(M;\ZZ)$ and $x$, $t$ as in Lemma \ref{omegahp}.
The stages $E^4$ and $E^{4n}$ of the spectral sequence with possible nonzero 
differentials are shown in the following diagram: 
\begin{displaymath}
\xymatrix@R=12pt@C=16pt@M=0pt{
 & & & & & &  &  &  & H_{q} (\Omega \HH P^{n}) \\
E_{p,q}^{4}, E_{p,q}^{4n} & & & & & &  &  &  & q\\
 & & & & & &  &  &  & \\
 & &\bullet \ar@{--}[l] \ar@{--}[rrrr]& & & & \bullet \ar@{--}[r]& \bullet \ar@{--}[r]& \bullet \ar@{-}[d] \ar@{->}[uu] \ar@{}[r]|(.2){x^{2}t} & 8n + 7 \\ 
 & &\bullet \ar@{--}[l] \ar@{--}[rrrr]& & & & \bullet \ar@{--}[r]& \bullet \ar@{--}[r] \ar[lu]|{d^{4}} & \bullet \ar@{-}[dddd] \ar[lu]|{d^{4}} \ar@{}[r]|(.2){x^{2}} & 8n + 4 \\
 & & & & & &  &  &  & \\
 & & & & & &  &  &  & \\
 & & & & & &  &  &  & \\
 & &\bullet \ar@{--}[l] \ar@{--}[rrrr]& & & & \bullet \ar@{--}[r]& \bullet \ar@{--}[r]& \bullet \ar@{-}[d] \ar[lllllluuuu]_-{d^{4n}} \ar@{}[r]|(.2){xt}& 4n+5 \\ 
 & &\bullet \ar@{--}[l] \ar@{--}[rrrr]& & & & \bullet \ar@{--}[r]& \bullet \ar@{--}[r]\ar[lu]|{d^{4}}& \bullet \ar@{-}[dddd] \ar[lu]|{d^{4}} \ar@{}[r]|(.2){x}& 4n+2 \\
 & & & & & &  &  &  & \\
 & & & & & &  &  &  & \\
 & & & & & &  &  &  & \\
 & &\bullet \ar@{--}[l] \ar@{--}[rrrr] & & & & \bullet \ar@{--}[r]& \bullet \ar@{}[u]|(.5){b} \ar@{--}[r]& \bullet \ar@{-}[d] \ar[lllllluuuu]_-{d^{4n}} \ar@{}[r]|(.2){t} & 3 \\ 
 & &\bullet \ar@{}[d]|(.4){a^{n} } \ar@{->}[l] \ar@{-}[rrrr]  & \ar[lu]|{d^{4}}& & & \bullet \ar@{}[d]|(.4){a^{2}} \ar@{-}[r] \ar[lu]|{d^{4}}& \bullet \ar@{}[d]|(.4){a} \ar@{-}[r] \ar[lu]|{d^{4}}& \bullet \ar[lu]|{d^{4}}          & 0 \\
H^{p}(\HH P^{n}) &p &-4n & & & &  -8 &  -4 & 0 & \\
}
\end{displaymath}
Since $E_{p,q}^{\infty}\Rightarrow \HH_{p+q} (L \HH P^{n};\ZZ) 
= H _{p+q + 4n} (L \HH P^{n};\ZZ)$ we can determine the differentials 
from the knowledge of the additive structure of $H_{*}(L \HH P^{n};\ZZ)$.

To compute it we use the result of \cite{BO} on the existence of a stable 
decomposition
$$
(L \HH P^{n})_{+} \simeq \HH P^{n}_{+} \vee \bigvee_{l \geq 1} 
S(\eta)^{ l \xi \oplus (l-1) \zeta }
$$
where $\eta$ is tangent bundle of the quaternionic projective space 
$\HH P^{n}$, $\xi$ is the $3$-dimensional Lie algebra bundle over 
$\HH P^{n}$ and $\zeta$ is the fibrewise tangent bundle 
of $S(\eta)$ and $S(\eta)^{\omega}$ stands for the Thom space of the vector 
bundle $\omega$ over $S(\eta)$.
Note that $\dim S(\eta) = 8n-1$ and $ \dim \zeta = 4n-1$. Using the Gysin 
long exact sequence for the fibration $ S^{4n-1} \to S(\eta) \to \HH P^{n}$ 
and the fact that the Euler characterictic class of $\eta$ is an
$(n+1)$-multiple of the generator $a^n\in H^{4n}(\HH P^n;\ZZ)$ we get 
\begin{displaymath}
H_{i} S(\eta) = \left\{
\begin{array}{ll} 
\ZZ & \qquad i = 0,4,\dots,4n-4, 4n+3, 4n+7, \dots, 8n-1, \\
\ZZ_{n+1}& \qquad i = 4n-1, \\
0& \qquad  {\rm otherwise}. \\
\end{array} \right.
\end{displaymath}
The dimesion of the vector bundle $l \xi \oplus (l-1) \zeta$ 
is $4n(l - 1)+2l+1$, so due to the Thom isomorphism  
$$
H_{*} (L \HH P^{n};\ZZ) \cong H_{*}(\HH P^{n};\ZZ) \oplus 
\bigoplus_{l\ge 1}H_{*+4n(l-1)+2l+1}(S (\eta);\ZZ).
$$
Since $\HH_*(L\HH P^n;\ZZ)\cong E^{\infty}_{*,*}$, the $E^{\infty}$ stage 
of the spectral sequence is the following
\begin{displaymath}
\xymatrix@R=12pt@C=16pt@M=1pt{
& &  & & &  &                     &  &  & q\\
 & &                                      & & &                       & 
 &  &  & \\
 & &\ZZ \ar@{--}[l] \ar@{--}[r]        &\ZZ \ar@{--}[rrr]& &             
 & \ZZ \ar@{--}[r]     & \ZZ \ar@{--}[r] & 0 \ar@{-}[d] \ar@{->}[uu]& 8n + 7 \\ 
 & &\ZZ_{n+1} \ar@{--}[l] \ar@{--}[r]    &\ZZ \ar@{--}[rrr]& &  
 & \ZZ \ar@{--}[r]     & \ZZ \ar@{--}[r] 
& \ZZ \ar@{-}[dddd] 
& 8n + 4 \\
 & &                                      & &   &                     &  
 &  &  & \\
 & &                                      & &   &                     &
 &  &  & \\
 & &                                      & &   &                     & 
 &  &  & \\
 & &\ZZ \ar@{--}[l] \ar@{--}[r]        & \ZZ \ar@{--}[rrr]& & 
 & \ZZ \ar@{--}[r]     & \ZZ \ar@{--}[r] & 
0 \ar@{-}[d] 
& 4n+5 \\ 
 & &\ZZ_{n+1} \ar@{--}[l] \ar@{--}[r]    & \ZZ \ar@{--}[rrr]& &  
 & \ZZ \ar@{--}[r]     & \ZZ \ar@{--}[r] 
& \ZZ \ar@{-}[dddd] 
& 4n+2 \\
 & &                                      & & &                       &
 &  &  & \\
 & &                                      & & &                       & 
 &  &  & \\
 & &                                      & & &                       & 
 &  &  & \\
 & &\ZZ \ar@{--}[l] \ar@{--}[r]        & \ZZ \ar@{--}[rrr]& &   
 & \ZZ \ar@{--}[r]     & \ZZ \ar@{--}[r]& 
0 \ar@{-}[d] 
& 3 \\ 
 & &\ZZ \ar@{->}[l] \ar@{-}[r]         & 
\ZZ 
\ar@{-}[rrr]& &    & \ZZ \ar@{-}[r] 
& \ZZ \ar@{-}[r] 
& \ZZ 
& 0 \\
 &p &-4n & & & &  -8 &  -4 & 0 & \\
}
\end{displaymath}
It forces the differentials 
$d^{4}$ in $E^4$ to be zero and the differentials 
$d^{4n}:E^{4n}_{0,(4n+2)i+3}\to E^4_{-4n,(4n+2)(i+1)}$ to be the 
multiplication by $n+1$. 

So $E^{\infty}_{*,*}$ as a ring is generated by the group generators 
$a \in E_{-4,0}^{\infty}\cong H^{4}(\HH P^{n};\ZZ)$, 
$x \in E_{0,4n + 2}^{\infty}\cong H_{4n+2}(\Omega \HH P ^{n};\ZZ)$ and
$b \in E_{-4,3}^{\infty}\cong
H^{4}(\HH P ^{n};\ZZ) \otimes H_{3}(\Omega \HH P ^{n};\ZZ)$ which satisfy
relations $a^{n+1} = 0$, $(n+1) \ x \otimes a^{n} = 0$, 
$b \otimes a^{n} = 0$, $b^{2} = 0$. We conclude that as rings
$$
\HH_{*} (L \HH P^{n};\ZZ) \cong E_{*,*}^{\infty} \cong \frac{\ZZ[a,b,x]}
{\langle a^{n+1}, b^2, a^{n}b, (n+1) a^{n}x \rangle}.
$$

In the case of the octonionic projective plane the derivation of the ring
structure of the loop homology follows the same lines.

\begin{lem}\label{omegaop} The Pontryagin ring structure of
$H_*(\Omega \OO P^2;\ZZ)$ is given by
$$
H_*(\Omega\OO P^2;\ZZ)\cong
\ZZ[x]\otimes \Lambda[t]
$$
where $|x|=22$ and $|t|=7$.
\end{lem}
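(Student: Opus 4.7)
The plan is to mirror the proof of Lemma \ref{omegahp} step by step, substituting the octonionic Hopf fibration $S^7 \to S^{23} \to \OO P^2$ for the quaternionic one. Applying the loop functor gives the fibration
$$
\Omega S^{23} \xrightarrow{j} \Omega \OO P^2 \xrightarrow{p} S^7.
$$
Since $\Omega S^{23}$ is $21$-connected, the long exact sequence of homotopy groups shows that $p_*: \pi_k(\Omega \OO P^2) \to \pi_k(S^7)$ is an isomorphism for every $k \le 20$. In particular, the identity of $S^7$ lifts uniquely up to homotopy to a map $i: S^7 \to \Omega \OO P^2$ with $p \circ i \simeq \id_{S^7}$, and the long exact sequence splits into short exact sequences.

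Next I would form $h = \mu \circ (j,i): \Omega S^{23} \times S^7 \to \Omega \OO P^2$, where $\mu$ denotes Pontryagin multiplication. Exactly as in Lemma \ref{omegahp}, $h_*$ is an isomorphism on every homotopy group, so $h$ is a homotopy equivalence. This immediately produces the additive isomorphism
$$
H_*(\Omega \OO P^2;\ZZ) \cong H_*(\Omega S^{23};\ZZ) \otimes H_*(S^7;\ZZ) \cong \ZZ[x] \otimes \Lambda[t]
$$
with $|x|=22$ and $|t|=7$.

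To pin down the Pontryagin product (and not merely the additive structure), I would rerun the Hopf-algebra duality argument. Via $h^*$ the cohomology becomes $H^*(\Omega \OO P^2;\ZZ) \cong \Gamma_{\ZZ}[\alpha_1,\alpha_2,\ldots] \otimes \Lambda[\beta]$ as an algebra, with $|\alpha_i|=22i$ and $|\beta|=7$. Because $j$ is an H-map, $j^*$ is a morphism of Hopf algebras into the well-known Hopf algebra $H^*(\Omega S^{23};\ZZ)$, and this forces the coproduct $\mu^*$ on $H^*(\Omega \OO P^2;\ZZ)$ to be given by exactly the formulas displayed in the proof of Lemma \ref{omegahp}, with $4n+3$ replaced by $23$ and the degree of $\beta$ replaced by $7$. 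Dualizing, with $x$ dual to $\alpha_1$ and $t$ dual to $\beta$, recovers the Pontryagin relations $x^i \cdot x^j = x^{i+j}$ and $t^2=0$, yielding the ring $\ZZ[x] \otimes \Lambda[t]$.

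The argument presents no serious new obstacle beyond what was already handled in Lemma \ref{omegahp}: the connectivity of $\Omega S^{23}$ is more than enough to guarantee the existence and essential uniqueness of the section, and every Hopf-algebra manipulation from the previous proof carries over verbatim under the substitution $(4n+3,3) \mapsto (23,7)$. The only thing that really needs to be verified is this numerical translation.
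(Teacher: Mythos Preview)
Your argument rests on the ``octonionic Hopf fibration'' $S^7 \to S^{23} \to \OO P^2$, but no such fibration exists. The quaternionic bundle $S^3 \to S^{4n+3} \to \HH P^n$ arises because $S^3 = Sp(1)$ is a Lie group acting on $\HH^{n+1}$; the octonions are non-associative, $S^7$ is not a Lie group, and the only octonionic Hopf fibration is $S^7 \to S^{15} \to S^8 = \OO P^1$. Although the Cayley plane $\OO P^2$ is a genuine $16$-manifold, it is \emph{not} the base of an $S^7$-bundle with total space $S^{23}$. Everything downstream in your proposal---the section $i$, the weak equivalence $h$, the splitting of homotopy groups---therefore has no foundation.

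This is exactly why the paper does not simply rerun the argument of Lemma~\ref{omegahp}. Instead it imports the algebra isomorphism
\[
H^*(\Omega \OO P^2;\ZZ)\cong \Gamma_{\ZZ}[\alpha_1,\alpha_2,\dots]\otimes\Lambda[\beta],\qquad |\alpha_i|=22i,\ |\beta|=7,
\]
from Halpern~\cite{Ha}, where it is obtained by other means (Serre spectral sequence for the path--loop fibration over $\OO P^2$, together with Hopf-algebra considerations). Once this cohomology algebra is in hand, the Hopf-algebra duality portion of your write-up does go through verbatim and yields the Pontryagin ring $\ZZ[x]\otimes\Lambda[t]$. So the fix is to replace the first half of your proof by a citation of~\cite{Ha} (or an independent computation of $H^*(\Omega\OO P^2;\ZZ)$ that does not invoke a non-existent bundle) and keep only the dualization step.
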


\begin{proof} Using the fact that
$$H^{*}(\Omega \OO P^{2}) \cong \Gamma_{\ZZ}[\alpha_1,\alpha_2,\dots]
\otimes \Lambda[\beta]$$
where $|\alpha_i|=22i$ and $|\beta|=7$, proved in \cite{Ha},
we can proceed in the same way as in the proof of Lemma \ref{omegahp}.
\end{proof}

The additive structure of $H_{*}(L \OO P^{2};\ZZ)$ was found in \cite{BO} 
using a stable decomposition of $L\OO P^2$  derived there: 
\begin{displaymath}
H_{i} (L \OO P^{2}) = \left\{
\begin{array}{ll} 
\ZZ & \qquad i = 0, 8, 16, 22m-15, 22m-7, 22m+8, 22m+16, \\
\ZZ_{3}& \qquad i = 22m, \\
0& \qquad  {\rm otherwise}. \\   
\end{array} \right.
\end{displaymath}  

It yields that in the spectral sequence starting with 
$$
E^2_{-p,q}=H^p(\OO P^2;H_q(\Omega\OO P^2;\ZZ))\cong H^p(\OO P^2;\ZZ)
\otimes H_q(\Omega\OO P^2;\ZZ)\cong \frac{\ZZ[a]\otimes \ZZ[x,b]}
{\langle a^3, b^2\rangle}
$$
all the differentials are zero with the exception of the differentials 
$d^{16}:E^{16}_{0,22m-15}\to E^{16}_{-16,22m}$ which act as the multiplication 
by $3$. The group generators $a\in E^{\infty}_{-8,0}\cong H^*(\OO P^2;\ZZ)$, 
$x \in E_{0,22}^{\infty}\cong H_{22}(\Omega \OO P ^{2};\ZZ)$ and
$b \in E_{-8,7}^{\infty}\cong
H^{8}(\HH P ^{n};\ZZ) \otimes H_{7}(\Omega \OO P ^{2};\ZZ)$, 
generate  $E^{\infty}_{*,*}\cong \HH_*(L\OO
P^2;\ZZ)$ as a ring satisfying relations $a^3=0$, $b^2=0$, $3ax=0$ 
and $a^2b=0$.

\section{The BV operator}

The BV operator $\Delta:\HH_*(LM)\to \HH_{*+1}(LM)$ and its unshifted
version $\Delta':H_*(LM)\to H_{*+1}(LM)$ come from the canonical
action of $S^1$ on $LM$. So any map $f:N\to M$ between manifolds induces 
a homomorhism $H_*(LN)\to H_*(LM)$ which commutes with $\Delta'$.
To determine the BV operator on $\HH_*(L\HH P^n;\ZZ)$ and 
$\HH_*(L\OO P^2;\ZZ)$ we use this fact for the inclusions $S^4\hookrightarrow
\HH P^n$ and $S^8\hookrightarrow \OO P^2$ together with the knowledge of the BV
operator on $\HH_*(S^n;\ZZ)$, see \cite{M}.

We start with the quaternionic projective space. First, $\Delta (a^px^q)=0$
because $\HH_{|a^px^q|+1}(L\HH P^n;\ZZ)=0$. Since $\HH_{|a^pb|+1}(L\HH
P^n;\ZZ)\cong\ZZ$ is generated by $a^p$, there is an integer $\nu_p$ such that
$\Delta(a^pb)=\nu_p a^p$. Due to the relation 
\begin{align*}
\Delta(xyz) =& \Delta(xy)z + (-1)^{|x|}x \Delta(yz)+(-1)^{(|x|-1)|y|}y 
\Delta(xz) \\
& - \Delta(x)yz - (-1)^{|x|}x \Delta(y) z - (-1)^{|x|+|y|}xy\Delta(z) \\
\end{align*}
we obtain 
$$
\Delta(a^{p} b) =  \Delta(a^{p-1} a b) = a^{p-1} \Delta(a b) + 
a \Delta(a^{p-1} b) - a^{p} \Delta( b).
$$
It yields the equation $\nu_{p} = \nu_{1} - \nu_{0} + \nu_{p-1}$, which can 
be rewritten as 
$$
\nu_{p} = p (\nu_{1} - \nu_{0}) + \nu_{0}.
$$
The relation $a^{n}b = 0$ implies that $\nu_{n}= 0$. Consequently, for $p=n$
the equation above gives $n \nu_{1} = (n-1) \nu_{0}$. 
Hence for $n\ge 2$ the only possible integer solutions of this equation are  
$$
\nu_{1} = (n-1) \lambda_{n}, \quad \nu_{0} = n \lambda_{n},
$$
where $\lambda_{n}$ is an integer. Consequently, 
we obtain $\nu_{p} = (n-p)\lambda_{n}$.

For $n=1$ the quaternionic projective space is $S^4$. According to
\cite{M} the generators of $\HH_*(L\HH P^1;\ZZ)$ as an algebra are 
$a_1$, $b_1$ and $v_1$  in degrees $-4$, $-1$ and $6$,
respectively, and $\Delta(b_1)=1$.

The standard inclusion $i:S^4=\HH P^{1} \hookrightarrow \HH P^{n}$ 
induces  the commutative diagram of fibrations 
$$
\xymatrix{
\Omega \HH P^1  \ar[rr] \ar[d] &&  \Omega \HH P^n \ar[d] \\
L \HH P^1 \ar[rr] \ar[d] && L \HH P^{n} \ar[d] \\
\HH P^1 \ar[rr] && \HH P^{n} \\
}
$$
The inclusion $i$ induces an isomorhism $H_{4} (\HH P^1;\ZZ) \cong 
H_{4} (\HH P^{n};\ZZ)$ and the inclusion $\Omega \HH P^{1} \hookrightarrow 
\Omega \HH P^{n}$
yields an  isomorphism $H_{3} (\Omega \HH P^1;\ZZ) \cong H_{3} (\Omega \HH
P^{n};\ZZ)$.

The commutative diagram above gives us a homomorphism between the 
Serre spectral sequences of the corresponding fibrations. (Here we consider
the spectral sequences with $E^2_{p,q}=H_p(M;H_q(\Omega M;\ZZ))$.) This
homomorphism is an isomorphism on  $E^2_{4,0}$ and $E^2_{0,3}$
terms and it remains an isomorphism also on $E^{\infty}_{4,0}$ and 
$E^{\infty}_{0,3}$. Consequently, for $i=-1$ and $0$ we obtain
$$
\HH_{i}(LS^4;\ZZ)=H_{i+4}(LS^4;\ZZ)\cong H_{i+4}(L\HH P^n;\ZZ)=\HH_{i-4(n-1)}
(L\HH P^n;\ZZ).
$$
Choose $b\in \HH_1(L\HH P^n;\ZZ)$ and $a\in \HH_4(L\HH P^n;\ZZ)$ so that
$a^{n-1}$ is the image of $1$ and $a^{n-1}b$ is the image of 
$b_1\in \HH_1(LS^4;\ZZ)$ under the above isomorphisms. 
Since these isomorphisms commute with $\Delta$, we obtain
$\Delta(a^{n-1}b)=a^{n-1}$. Consequently, $\lambda_n=1$. 

Analogously we get  $\Delta(bx^q)=\rho_qx^q$ for an
integer $\rho_q$ and derive that
$$
\rho_{q} =  q (\rho_{1}-\rho_{0}) + \rho_{0}.
$$
Since $\rho_{0} = \nu_{0} = n \lambda_{n}=n$,  we obtain
\begin{align*}
\Delta(a^{p} b x^{q}) &= \Delta(a^{p} b) x^{q} + a^{p}\Delta(b x^{q}) - a^{p} x^{q} \Delta( b ) = \\
 &= [(n-p) + q (\rho_1 - n )] a^{p} x^{q}.
\end{align*}

In \cite{Y} T. Yang computed the BV-algebra structure of the Hochshild
cohomology of truncated polynomials. Using Theorem 1 from \cite{FT} on the
existence of a BV-algebra isomorhism between the loop homology $\HH_*(LM;\FF)$ 
of a manifold and the Hochschild cohomology $HH^*(C^*(M);C^*(M))$ of the 
singular cochain complex over fields of characteristic zero, he was able to 
calculate the BV-algebra structure of $\HH_*(L\HH P^n;\QQ)$. 
This is given by 
$$
\HH_{*} (L \HH P^{n};\QQ) = \frac{\QQ[\alpha, \beta ,\chi]}
{\langle \alpha^{n+1}, \beta^{2}, \alpha^{n} \beta, \alpha^{n} \chi\rangle},
$$
where $|\alpha|=-4$, $|\beta|=-1$, $|\chi|=4n+2$,
and by
$$
\Delta(\alpha^{p} \chi^{q}) =0, \quad  
\Delta(\alpha^{p} \beta \chi^{q} ) = [(n-p) + q (n+1) ] \alpha^{p} \chi^{q}.
$$
Consider the homomorphism $r_{*}: \HH_{*} (L \HH P^{n}; \ZZ) 
\to \HH_{*} (L \HH P^{n}; \QQ)$  induced by the inclusion
$\ZZ\hookrightarrow \QQ$. Let 
$$
r_*(a)=k\alpha, \quad r_*(b)=l\beta, \quad r_*(x)=m\chi, 
$$
where, $k,l,m \in \QQ - \{ 0 \}$. Since $r_*$ is a homomorphism of
BV-algebras, we obtain
$$
[(n-p) + q (\rho_1 -n)] k^{p} m^{q} 
\alpha^{p} \chi^{q} =r_*(\Delta(a^pbx^q)=\Delta(r_*(a^pbx^q)) 
= l[(n-p) + q (n+1)] k^{p} m^{q} \alpha^{p} \chi^{q}.
$$
Putting $q=0$ we get $l=1$. Then the choice $p=0$, $q=1$ yields 
$\rho_1 = 2n+1$ which concludes our computation.

\medskip
To compute the BV operator in $\HH_*(L\OO P^2;\ZZ)$ we can follow the same 
procedure step by step replacing the inclusion 
$S^4\hookrightarrow \HH P^n$ by the inclusion $S^8 \hookrightarrow \OO P^2$.

\end{document}